\def\I{\mathop{\mbox{\rm I}}}
\def\J{\mathop{\mbox{\rm J}}}
\def\Ext{\mathop{\mbox{\rm Ext}}}
\def\xor{\mathop{\mbox{\rm xor}}}
\def\neu#1{{\bf #1}}
\def\KK{\mathbb{K}}
\def\frak#1{\mathfrak{#1}}
\def\cal#1{\mathcal{#1}}
\def\dotcup{\mathop{\mathaccent\cdot\cup}} 
\newlength{\bigcupwidth}
\begin{document}
\title{On the Size of $\exists$-Generalized Concepts}
\author{Léonard Kwuida~\inst{1} \and Rostand S. Kuitché~\inst{2} \and Romuald E. A. Temgoua~\inst{3}}
\institute{Bern University of Applied Sciences,\\ Brückenstrasse 73, 3005 Bern, Suisse \\
\email{leonard.kwuida@bfh.ch}
\and
Université de Yaoundé I,
Département des Mathematiques, \\
 BP 812 Yaoundé, Cameroun
 \and 
Université de Yaoundé I,
École Normale Supérieure, \\
 BP 47 Yaoundé, Cameroun}
\maketitle

\begin{abstract}
Formal Concept Analysis (FCA) offers several tools for qualitative data analysis. One possibility is to group objects that share common attributes together and get a concept lattice that describes the data. Quite often the size of this concept lattice is very large. Many authors have investigated methods to reduce the size of this lattice. In \cite{KMBV14} the authors consider putting together some attributes to reduce the size of the attribute sets. But this reduction does not always carry over the set of concepts. They have provided some counter examples where the size of the concept lattice increases by one after putting two attributes together. Then they asked the following question: "How many new concepts can be generated by an $\exists$-generalization on just two attributes?" The present paper provides a family of contexts for which the size increases on more than one concept after putting solely two attributes together.
\end{abstract}
\paragraph{Keywords}: Formal Concept Analysis; Concept Lattices; Generalizing Attributes; 
 
\section{Introduction}
An elementary information system can be represented by a set $G$ of objects or entities, a set $M$ of attributes or characteristics together with an incidence relation $\I$ that encodes whether an object $g\in G$ has an attribute $m\in M$. For such a system we write $(g,m)\in\I$ or $g\I m$ to mean that the object $g$ has the attribute $m$. The binary relation $\KK:=(G,M,\I)$ is called a  \neu{formal context}, and describes an elementary information system.  

To extract knowledge from such information systems, one possibility is to get clusters of objects and/or attributes by grouping together those sharing the same characteristics. These pairs, called \neu{concepts}, were formalized by Rudolf Wille~\cite{Wi82}. Traditional philosophers consider a concept as defined by two parts: an extent and an intent. The \neu{extent} contains all entities belonging to the concept and the \neu{intent} is the set of all attributes common to all entities in the concept.  
To formalize the notion of concept the following notations has been introduced for $A\subseteq G$ and $B\subseteq M$, known as \neu{derivation operators} in Formal Concept Analysis:
\begin{align*}
A^{\prime }=\{m\in M \mid (g,m)\in \I\text{ for all }g\in A\}, \\
B^{\prime }=\{g\in M \mid (g,m)\in \I\text{ for all }m\in B\}, \\ 
\end{align*}
where $\I$ denotes the corresponding \neu{incidence relation}.
$A^\prime$ contains all attributes shared by the objects in $A$ altogether. $B^\prime$ contains all objects satisfying all the attributes in $B$. Thus a concept is a pair $(A,B)$ with $A^\prime=B$ and $B^\prime =A$. The extents are then subsets $A$ of $G$ with $A^{\prime\prime}=A$, and intents subsets $B$ of $M$ with $B^{\prime\prime}=B$. For a single object or attribute $x$ we write $x^\prime$ for $\{x\}^\prime$. The map $X\mapsto X''$ is a \neu{closure operator} (on $\cal{P}(G)$ or $\cal{P}(M)$) and $X''$ is called the closure of $X$ in $\KK$. Subsets $X$ with $X''=X$ (i.e extents and intents) are closed subsets. 

We will denote the set of formal concepts of a context $\KK$ by $\frak{B}(\KK)$ and the set of its extents by $\Ext(\KK)$.
 A concept $c_2:=\left(A_{2},B_{2}\right)$ is said to be more general than a concept $c_1:=\left(A_{1},B_{1}\right)$ 
if $c_2$ contains all objects of $c_1$. In that case each attribute satisfied by all objects of $c_2$ is also satisfied by all objects of $c_1$. 
\begin{align*}
\left(A_{1},B_{1}\right)\leqslant\left(A_{2},B_{2}\right):\iff A_{1}\subseteq A_2,
\quad \mbox{(or equivalently   $B_{1}\supseteq B_{2}$)}.
\end{align*}
The  relation $\leqslant$ on concepts is an order relation called \neu{concept  hierarchy}. Each subset of $\frak{B}(\KK)$ has a supremum and an infimum with respect to $\leqslant$. Therefore $\left(\frak{B}(\KK),\leqslant\right)$ is a \neu{complete lattice} called the  \neu{concept lattice} of the context $\KK$.

\begin{table}[ht]
\begin{minipage}[b]{0.56\linewidth}
	\begin{center}
	\begin{tabular}{|l||l|l|l|l|} \hline
	$\KK$& $v$ & $u$ & $a$ & $b$ \\ \hline \hline
	$a$ &  & $\times $ & $\times $ &  \\ \hline
	$b$ & $\times $ &  &  & $\times $ \\ \hline
	$c$ & $\times $ & $\times $ &  & \\ \hline
	$g$ & $\times $ & $\times $ & $\times $  & $\times $ \\ \hline
	\end{tabular}%
\vspace{5mm}
	\caption{A formal context.}\label{tab:1}
	\end{center}
\end{minipage}\hfill
\begin{minipage}[b]{0.4\linewidth}
		\centering
	\includegraphics[scale=.5]{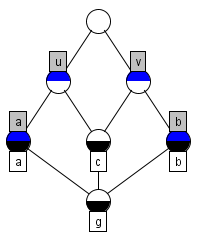}
		\captionof{figure}{A concept lattice} \label{fig:1}
	\end{minipage}
\end{table}
\noindent
Figure~\ref{fig:1} shows the concept lattice of the formal context in Table~\ref{tab:1}. 
Concepts are nodes. The extent of a node contains all objects below this node, and its intent all objects above it. The node at the center of this figure represents the concept $(\{c,g\},\{u,v\})$.

The size of concept lattices can be very large, even exponential with respect to the size of the context. For example the context $(E,E\neq)$, where $E$ is any set, has $2^{|E|}$ concepts. In fact $A^\prime=E\setminus A$ and $A^{\prime\prime}=A$ for any subset $A$ of $E$. Thus all pairs $(A,E\setminus A)$ are concepts of $(E,E\neq)$.

To control the size of concept lattices several methods have been suggested: decomposition~\cite{Wi85,Wi87,Wi89} , iceberg lattices~\cite{STBPL}, $\alpha$-Galois lattices~\cite{VS04}, fault tolerant patterns~\cite{BPRB}, similarity measures~\cite{AB11}, closure or kernel operators and/or approximation~\cite{Kw08},  generalized attributes~\cite{KMBV14}. In the present contribution we are following the direction in~\cite{KMBV14}, where some attributes are put together to defined a generalized attribute. 

When some attributes are put together, the main issue is to decide when an object has this new combined attribute. Different scenarios have been discussed in~\cite{KMBV14}:
\begin{itemize}
\item[$(\forall)$:] The object should satisfy each of the attributes that were combined.
\item[$(\alpha)$:] The object should satisfy at least a certain proportion of the attributes that were combined.
\item[$(\exists)$:] The object should satisfy at least one of the attributes that were combined. 
\end{itemize} 
By putting together some attributes we reduce the number of attributes and hope to also reduce the size of the concept lattice. This is true for $\forall$-generalizations, but not always the case for $\exists$-generalizations.  
In~\cite{KMBV14} some examples were presented where the size augment by one after a $\exists$-generalization. Their authors then asked whether the size can increase by more than one element after putting solely two attributes together. The present paper gives a positive answer to this question. In fact, we provide a family of contexts where the increase is exponential in the size of the attribute set.
  
Generalizing two attributes $m_1,m_2$ to get a new attribute $m_{12}$ can be done in two steps: (i): adding $m_{12}$ to the initial context and (ii) removing $m_1,m_2$ from the context. Therefore we start by discussing in Section~\ref{s:new_a} the effect of adding a new attribute in a context $\KK$. The main result here says that the maximal number of new concepts is $|\frak{B}(\KK)|$ \footnote{By $|X|$ we denote the number of elements in the set $X$.} and can be reached. This means that adding a new attribute to $\KK$ can double the size of $\frak{B}(\KK)$.
 In Section~\ref{s:increase} we present a family of contexts where the size increases by more than one after putting two attributes together, and by then answer the question raised in \cite{KMBV14}. Finally, we show in Section~\ref{s:max_increase} that the increase from Section~\ref{s:increase} is the maximum. The last section concludes the works and present further directions to be investigated. 
 
\section{Adding a new attribute into a context} \label{s:new_a}
When constructing concept lattices the incremental methods \cite{GMA95,VML02} consist in starting with one object/attribute and adding the rest of objects/attri\-butes one after another. In this section we review the effect of adding one attri\-bute. Let $\KK:=(G,M,\I)$ be a context, and $a\notin M$ an attribute that can be shared by some elements of $G$.  
We set $M_a:=M\dotcup\{a\}$ and $\KK_a:=(G,M_a,\I_a)$ where 
$$\I{}_a:=\I\cup\{(g,a)\mid g \mbox{ has the new attribute } a\}.$$ To distinguish between the derivation on sets of objects in $\KK$ and in $\KK_a$ we will use the name of the relation instead of $'$. That said we will write for $A\subseteq G$
\[A^{\I}=\{m\in M\mid g\I m \text{ for all }g\in A\}\] 
and 
\[A^{\I_a}=\{m\in M\cup\{a\}\mid g\I m \text{ for all }g\in A\}. \]
This distinction is not necessary on sets of attributes of $\KK$, unless we are looking for their closures.
 
If $a'=G$, then $\left|\frak{B}\left(\KK_a\right)\right|$ = $\left|\frak{B}(\KK)\right|$.  Each concept $(A,B)$ of $\KK$ has a corresponding concept $(A,B\dotcup\{a\})$ in $\KK_a$, and vice-versa. The above equality still holds even if $a'\neq G$, but $a'=B'$ for some $B\subseteq M$.

\begin{proposition} \label{p:prop1}
Let $\KK$ be a formal context and $\KK_a$ the formal context obtained by adding the attribute $a$ to $\KK$.
The map
\begin{equation*}
\begin{tabular}{llll}
$\phi _{a}:$ & $\frak{B}(\KK)$ & $\longrightarrow $ & $\frak{B}(\KK_a)$ \\
& $(A,B)$ & $\longmapsto $ & 
$\begin{cases}
(A,B\dotcup\{a\}) & \mbox{ if } A\subseteq a' \\
(A,B) & \mbox{else }%
\end{cases}%
$
\end{tabular}%
\end{equation*}%
is an injective map.
\end{proposition}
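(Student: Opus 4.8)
The plan is to establish two things: first that $\phi_a$ is well defined, i.e.\ that $\phi_a(A,B)$ is genuinely a concept of $\KK_a$ for every $(A,B)\in\frak{B}(\KK)$, and second that $\phi_a$ is injective. The second will turn out to be almost immediate once the first is in place, so the real content lies in the well-definedness.

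For well-definedness, the key observation I would record first is that passing from $\I$ to $\I_a$ does not affect the derivation of any set of \emph{old} attributes: for $B\subseteq M$ we have $B^{\I_a}=B^{\I}$, since no object acquires a new incidence with an attribute of $M$. Likewise $\{a\}^{\I_a}=a'$ by definition, and hence $(B\dotcup\{a\})^{\I_a}=B^{\I}\cap a'$. These three identities let me compute all closures in $\KK_a$ purely in terms of data available in $\KK$.

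I would then split along the two cases of the definition. If $A\subseteq a'$, every object of $A$ carries $a$, so $a\in A^{\I_a}$; combined with $A^{\I}=B$ this gives $A^{\I_a}=B\dotcup\{a\}$. For the reverse direction, $(B\dotcup\{a\})^{\I_a}=B^{\I}\cap a'=A\cap a'=A$, using $A\subseteq a'$. Hence $(A,B\dotcup\{a\})$ is a concept of $\KK_a$. If instead $A\not\subseteq a'$, then some object of $A$ lacks $a$, so $a\notin A^{\I_a}$ and therefore $A^{\I_a}=A^{\I}=B$; together with $B^{\I_a}=B^{\I}=A$ this shows $(A,B)$ is a concept of $\KK_a$. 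In both branches the extent is unchanged and equal to $A$.

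Injectivity is then free. Both branches of $\phi_a$ leave the first coordinate $A$ untouched, so $\phi_a(A_1,B_1)=\phi_a(A_2,B_2)$ forces $A_1=A_2$; since a concept is determined by its extent (here $B_i=A_i^{\I}$), we conclude $B_1=B_2$ and the two concepts coincide. I do not expect a genuine obstacle: the only point demanding care is the bookkeeping of the two distinct derivation operators, and in particular confirming that the derivation of an old attribute set is insensitive to the addition of $a$ — this is exactly what keeps the extent fixed and makes the injectivity automatic.
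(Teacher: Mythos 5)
Your proof is correct and follows essentially the same route as the paper's: verify well-definedness by computing $(B\dotcup\{a\})^{\I_a}=B^{\I}\cap a'=A$ and $A^{\I_a}=B\dotcup\{a\}$ (resp.\ $A^{\I_a}=B$ when $A\nsubseteq a'$), then note that injectivity is immediate because both branches preserve the extent. Your explicit preliminary identities and the observation that a concept is determined by its extent just make slightly more explicit what the paper leaves implicit.
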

\begin{proof}
The map $\phi _{a}$ is well defined. In fact, for a concept $(A,B)\in \frak{B}(\KK)$ with $A\subseteq a'$,  we have $\left(B\dotcup\{a\}\right)' = B' \cap a' = A\cap a' = A$, and $A^{\I_a}=A^{\I}\cup\{a\}=B\dotcup\{a\}$. Thus $(A,B\dotcup\{a\})$ is a concept of $\KK_a$. 
For a concept $(A,B)\in \frak{B}(\KK)$ with $A\nsubseteq a'$,  we have $B'= A$, and $A^{\I_a}=A^{\I}=B$, since $a$ is not in $A^{\I_a}$. The injectivity of the map $\phi_a$ is trivial. If two concepts $(A_1,B_1)$ and $(A_2,B_2)$ of $\KK$ have the same image under $\phi_a$, then $A_1$ and $A_2$ are both included in $a'$ or both not included in $a'$, and are therefore equal. \qed
\end{proof}
After adding an attribute $a$ to a context $\KK$, we will identify $(A,B)\in\frak{B}(\KK)$ with $\phi_a(A,B)\in\frak{B}(\KK_a)$, and write $(A,B)\equiv \phi_a(A,B)$. From Proposition~\ref{p:prop1} we get $\left|\frak{B}(\KK)\right| \le \left|\frak{B}(\KK_a)\right|$. Moreover, the increase due to adding $a$, which is the difference $\left|\frak{B}(\KK_a)\right| - \left|\frak{B}(\KK)\right|$,  can be computed as the number of concepts of $\KK_a$ that cannot be identified  (via $\phi_a$) with any concept in $\frak{B}(\KK)$. 

We consider $(A,B)$ in $\frak{B}(\KK)$ with $A\nsubseteq a^\prime$. It holds
\[
\frak{B}(\KK_a)\ni (A,B)\equiv (A,B)\in\frak{B}(\KK), \ \text{since } A\nsubseteq a'.
\]
Moreover, $A\cap a'$ is an extent of $\KK_a$. If $A\cap a'$ is also an extent of $\KK$, then 
\[
\frak{B}(\KK_a)\ni\left(A\cap a',(A\cap a')^{\I_a}\right)\equiv \left(A\cap a',(A\cap a')^{\I}\right)\in\frak{B}(\KK) \quad \text{ since } A\cap a'\subseteq a'.
\]
Note that $(A\cap a^\prime)^{\I_a} =(A\cap a')^{\I}\dotcup \{a\}$ and $(A\cap a')^{\I} =(A\cap a^\prime)^{\I_a}\cap M$. 
Although $(A,B)$ and $(A\cap a^\prime, (A\cap a')^{\I}\dotcup\{a\})$ are two different concepts of $\KK_a$, they are equivalent to two concepts of $\KK$ when $A\cap a'$ is an extent of $\KK$. 
 A concept $(A,B)$ of $\KK$ induces two concepts of $\KK_a$ whenever $A\nsubseteq a^\prime$. In the definition of $\phi_a$ in Proposition~\ref{p:prop1} we went for $(A,B)$ instead of $(A\cap a', (A\cap a')^{\I}\dotcup\{a\})$. This choice is motivated by the injectivity of $\phi_a$ being straightforward.
If $A\nsubseteq a'$ and $A\cap a'$ is an extent of $\KK$ then the two concepts induced by $(A,B)$  in $\KK_a$ have their equivalent in $\frak{B}(\KK)$. 
Then adding $a$ to $\KK$ will increase the size of the concept lattice only if there is $A$ in $\Ext(\KK)$ such that $A\cap a'$ is not in $\Ext(\KK)$. 

Each extent of $\KK_a$ is an extent of $\KK$ or an intersection of an extent of $\KK$ with $a'$. The concepts of $\KK_a$ that cannot be identified (via $\phi_a$) to a concept of $\KK$ are 
\begin{align*}
\left\{\left(A\cap a^\prime, (A\cap a^\prime)^{\I}\dotcup\{a\}\right) \mid A\in\Ext(\KK) \text{ and } A\cap a^\prime \notin\Ext(\KK)\right\}.
\end{align*}
Note that it is possible to have two different extents $A_1, A_2\in\Ext(\KK)$ with $A_1\cap a' = A_2\cap a' \notin \Ext(\KK)$.  In this case we say that the extents $A_1$ and $A_2$ coincide on $a'$.  The increase is then less or equal to $|\frak{B}(\KK)|$.  We can now sum up the finding of the above discussion in the next proposition.
\begin{proposition} \label{p:prop2} Let $\KK_a$ be a context obtained by adding an attribute $a$ to a context $\KK$. We set
\begin{align}
\mathcal{H}(a):=
\left\{A\cap a^\prime \mid A\in\Ext(\KK) \text{ and } A\cap a^\prime \notin\Ext(\KK)\right\} \ \text{ and }\  h(a):=\left|\mathcal{H}(a)\right|.
\end{align}
\begin{enumerate}
\item The increase in the number of concepts due to adding the attribute $a$ to $\KK$ is
\[
 \left|\frak{B}\left(\KK_a\right)\right|-|\frak{B}(\KK)|= h(a) \le |\frak{B}(\KK)|
\]
\item The maximal increase is  $h(a)=|\frak{B}(\KK)|$ and is reached if each $A\in\Ext(\KK)$ satisfies $A\cap a^\prime\notin\Ext(\KK)$ and no pairs $A_1, A_2 \in\Ext(\KK)$ 
coincide on $a'$. 
\end{enumerate}
\end{proposition}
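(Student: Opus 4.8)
The plan is to convert the qualitative description of $\Ext(\KK_a)$ established just before the statement into a counting argument organized around a single map. The key object is
\[
\psi:\ \{A\in\Ext(\KK)\mid A\cap a'\notin\Ext(\KK)\}\ \longrightarrow\ \mathcal{H}(a),\qquad A\longmapsto A\cap a'.
\]
By the very definition of $\mathcal{H}(a)$ this map is surjective onto its codomain, and everything below reduces to controlling its fibres.

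For part~(1), I would first note that the increase $\left|\frak{B}(\KK_a)\right|-|\frak{B}(\KK)|$ equals the number of concepts of $\KK_a$ that are \emph{not} identified via $\phi_a$ with a concept of $\KK$. The preceding discussion already tells us that every such new concept has the form $\left(A\cap a',(A\cap a')^{\I}\dotcup\{a\}\right)$ with $A\in\Ext(\KK)$ and $A\cap a'\notin\Ext(\KK)$, and conversely that each such pair is a concept of $\KK_a$ carrying $a$ in its intent, hence is new. Since a concept is determined by its extent, two of these new concepts coincide exactly when their extents $A\cap a'$ coincide; therefore the new concepts are in bijection with the \emph{distinct} values $A\cap a'$, that is, with the set $\mathcal{H}(a)$, which gives increase $=h(a)$. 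The inequality $h(a)\le|\frak{B}(\KK)|$ then follows because $\psi$ is a surjection from a subset of $\Ext(\KK)$ onto $\mathcal{H}(a)$, whence $h(a)=|\mathcal{H}(a)|\le|\Ext(\KK)|=|\frak{B}(\KK)|$, the final equality holding since extents biject with concepts.

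For part~(2), the bound from part~(1) already shows that $|\frak{B}(\KK)|$ is the largest value the increase can take, so it remains to check that the two stated hypotheses force equality. The hypothesis that every $A\in\Ext(\KK)$ satisfies $A\cap a'\notin\Ext(\KK)$ makes the domain of $\psi$ all of $\Ext(\KK)$; the hypothesis that no pair $A_1,A_2\in\Ext(\KK)$ coincides on $a'$ makes $\psi$ injective. Hence $\psi$ becomes a bijection from $\Ext(\KK)$ onto $\mathcal{H}(a)$, giving $h(a)=|\Ext(\KK)|=|\frak{B}(\KK)|$ and attaining the bound.

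I expect the only delicate point to be the bijection in part~(1): one must keep in mind that the set-builder notation defining $\mathcal{H}(a)$ silently discards duplicates, so that coinciding extents (which genuinely occur, as noted before the statement) contribute a single new concept rather than several. Performing the count on the level of the image set $\mathcal{H}(a)$, rather than on the indexing extents $A$, handles this automatically, and the remainder is bookkeeping that the paragraphs preceding the proposition have already arranged.
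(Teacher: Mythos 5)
Your proposal is correct and follows essentially the same route as the paper: the paper's own proof is exactly the discussion preceding the proposition, which identifies the concepts of $\KK_a$ not hit by $\phi_a$ with the \emph{distinct} sets $A\cap a'\notin\Ext(\KK)$, i.e.\ with $\mathcal{H}(a)$, which is what your map $\psi$ and its image formalize. One small repair to your wording: a pair $\left(A\cap a',(A\cap a')^{\I}\dotcup\{a\}\right)$ is new not because $a$ lies in its intent (concepts $\phi_a(A,B)$ with $A\subseteq a'$ also carry $a$ in their intent), but because its extent is not in $\Ext(\KK)$, whereas every concept in the image of $\phi_a$ has its extent in $\Ext(\KK)$.
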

Before we continue with the discussion on the maximal increase, let us look at two examples, where an attribute $m$ has been added to the context of Table~\ref{tab:1}. 
In the first case (left of Fig.\ref{fig:2}) the concept lattice of Fig.~\ref{fig:1}  has been doubled and the maximal increase is reached. In the second case only the concepts in the interval $[\emptyset'';\{a,c\}'']$ of the concept lattice of Fig.~\ref{fig:1} has been doubled. Note that in both cases, $g\in\emptyset''\neq \emptyset$.

\begin{figure}[h] 
\begin{center}
	\begin{tabular}{|l||l|l|l|l|l|} \hline
		$\KK$& $v$ & $u$ & $a$ & $b$ &$m$\\ \hline \hline
		$a$ &  & $\times $ & $\times $ & & $\times $  \\ \hline
		$b$ & $\times $ &  &  & $\times $ & $\times $ \\ \hline
		$c$ & $\times $ & $\times $ &  & &$\times $ \\ \hline
		$g$ & $\times $ & $\times $ & $\times $  & $\times $ & \\ \hline
	\end{tabular}%
	\quad\hspace{2cm}
	\begin{tabular}{|l||l|l|l|l|l|} \hline
		$\KK$& $v$ & $u$ & $a$ & $b$ &$m$\\ \hline \hline
		$a$ &  & $\times $ & $\times $ & & $\times $  \\ \hline
		$b$ & $\times $ &  &  & $\times $ &  \\ \hline
		$c$ & $\times $ & $\times $ &  & &$\times $ \\ \hline
		$g$ & $\times $ & $\times $ & $\times $  & $\times $ & \\ \hline
	\end{tabular}%

\bigskip
	\includegraphics[scale=.5]{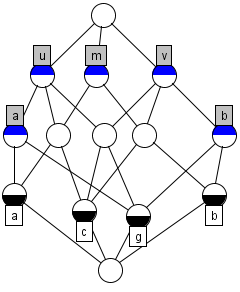}
	\quad\hspace{2cm}
	\includegraphics[scale=.5]{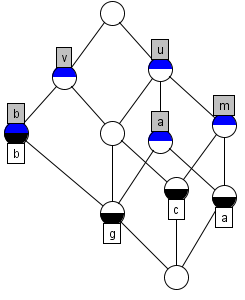}
\end{center}
	\caption{Two tables obtained by adding an attribute $m$ to the context in Table~\ref{tab:1}, and their corresponding concept lattices.} \label{fig:2}
\end{figure}

Based on the examples in Fig.~\ref{fig:2} and  Proposition~\ref{p:prop2}, we can now discuss the maximal increase.  Note that if $A\in\Ext(\KK)$ and $A\cap a' \notin\Ext(\KK)$, then $A\nsubseteq a'$. Moreover, if $A\nsubseteq a'$ for every extent $A$ of $\KK$, then in particular $\emptyset''\nsubseteq a'$. Thus there is $g\in \emptyset''$ such that $g\notin a'$. This element $g$ is in every extent of $\KK$, but is not in $a'$. Conversely, if an element $g$ is in $\emptyset ''\setminus  a'$, then $g$ is in every extent $A$ of $\KK$, and $g$ is not in $A\cap a'$. Thus $g\in \left(A\cap a'\right)^{\I\I}$ and $g\notin A\cap a'$, i.e. $A\cap a'$ is not closed in $\KK$. Thus $A\cap a'\notin\Ext(\KK)$ for each $A\in\Ext(\KK)$. 

\begin{proposition} \label{p:prop3} Let $\KK$ be a formal context and $a$ an attribute added to $\KK$. The following are equivalent:
	\begin{itemize}
\item[(i)] For every extent $A$ of $\KK$, $A\cap a'$ is not an extent of $\KK$.
\item[(ii)] $\emptyset^{\I\I}\setminus a'\ne\emptyset$.
	\end{itemize}
\end{proposition}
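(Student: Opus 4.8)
The plan is to establish the two implications separately, using only two standard facts about the closure operator $X \mapsto X^{\I\I}$ on $\mathcal{P}(G)$: it is \emph{monotone}, so $X \subseteq Y$ implies $X^{\I\I} \subseteq Y^{\I\I}$; and consequently $\emptyset^{\I\I}$, the closure of the empty set, is the \emph{smallest extent} of $\KK$, contained in every extent $A$ (since $\emptyset \subseteq A$ forces $\emptyset^{\I\I} \subseteq A^{\I\I} = A$).

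For the implication (i) $\Rightarrow$ (ii), I would first record the elementary observation that any extent $A$ with $A \subseteq a'$ satisfies $A \cap a' = A$, so that $A \cap a'$ is then automatically an extent. Contraposing, hypothesis (i) forces $A \nsubseteq a'$ for every extent $A$ of $\KK$. Specializing to the particular extent $A = \emptyset^{\I\I}$ yields $\emptyset^{\I\I} \nsubseteq a'$, which is precisely $\emptyset^{\I\I} \setminus a' \ne \emptyset$, i.e.\ (ii).

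For the converse (ii) $\Rightarrow$ (i), I would fix a single witness $g \in \emptyset^{\I\I} \setminus a'$ and show that the same $g$ blocks the closedness of $A \cap a'$ for \emph{every} extent $A$. Because $\emptyset^{\I\I}$ sits inside every extent, we have $g \in A$; and because $g \notin a'$, we get $g \notin A \cap a'$. On the other hand, monotonicity applied to $\emptyset \subseteq A \cap a'$ gives $g \in \emptyset^{\I\I} \subseteq (A \cap a')^{\I\I}$. Thus $g$ belongs to the closure of $A \cap a'$ but not to $A \cap a'$ itself, so $A \cap a'$ is not closed, and hence not an extent of $\KK$.

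I do not anticipate a real obstacle here: once the smallest-extent property of $\emptyset^{\I\I}$ and the monotonicity of the closure are in place, both directions are pure bookkeeping. The only points worth stating explicitly are why $\emptyset^{\I\I}$ lies in every extent, and why the chosen $g$ functions as a \emph{uniform} witness that works simultaneously for all extents $A$, rather than requiring a separate element for each $A$.
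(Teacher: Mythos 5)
Your proof is correct and follows essentially the same route as the paper: the forward direction via the observation that $A\subseteq a'$ would make $A\cap a'=A$ an extent, then specializing to the smallest extent $\emptyset^{\I\I}$; and the converse via a single witness $g\in\emptyset^{\I\I}\setminus a'$ that lies in the closure of every $A\cap a'$ but not in $A\cap a'$ itself. Nothing is missing.
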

Both contexts of Fig.~\ref{fig:2} satisfy the above conditions (the added attribute $a$ is $m$). Each extent of $\KK$ generates two extents of $\KK_m$ and one of these cannot be identified (via $\phi_m$) with an extent of $\KK$. However, some of these new concepts can be equal. In fact if two extents coincide on $m'$, then they generate the same new concept. To avoid coincidences on $m'$, it is enough to have $m'=G\setminus \{g\}$.

\begin{corollary} \label{c:cor1}
	Let $\KK$ be a formal context such that $\emptyset ''\neq\emptyset $  and $\KK_a$ a context obtained by adding an attribute $a$ to $\KK$ such that  $a'=G\setminus\emptyset''$. Then we have $$\left|\frak{B}(\KK_a)\right| = 2\cdot \left|\frak{B}(\KK)\right|.$$
\end{corollary}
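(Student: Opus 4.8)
The plan is to show that the two sufficient conditions in Proposition~\ref{p:prop2}(2) both hold under the present hypotheses; once this is done we immediately obtain $h(a)=|\frak{B}(\KK)|$, and therefore $|\frak{B}(\KK_a)|=|\frak{B}(\KK)|+h(a)=2\cdot|\frak{B}(\KK)|$.

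First I would check that every extent $A$ of $\KK$ satisfies $A\cap a'\notin\Ext(\KK)$. By Proposition~\ref{p:prop3} this is equivalent to $\emptyset''\setminus a'\ne\emptyset$. Since $a'=G\setminus\emptyset''$, a direct computation gives $\emptyset''\setminus a'=\emptyset''\setminus(G\setminus\emptyset'')=\emptyset''$, which is nonempty by the assumption $\emptyset''\ne\emptyset$. Hence the first condition holds: no intersection $A\cap a'$ is an extent of $\KK$, so each extent of $\KK$ genuinely produces a new concept of $\KK_a$.

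Second I would verify that no two distinct extents of $\KK$ coincide on $a'$. The key observation is that $\emptyset''$ is the smallest extent of $\KK$, hence contained in every extent: from $\emptyset\subseteq A$ and the monotonicity of the closure operator we get $\emptyset''\subseteq A''=A$ for all $A\in\Ext(\KK)$. For such an $A$ we then have $A\cap a'=A\cap(G\setminus\emptyset'')=A\setminus\emptyset''$. Now if $A_1\cap a'=A_2\cap a'$ for two extents $A_1,A_2$, then $A_1\setminus\emptyset''=A_2\setminus\emptyset''$; writing $A_i=(A_i\setminus\emptyset'')\cup\emptyset''$, which is valid because $\emptyset''\subseteq A_i$, forces $A_1=A_2$. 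So intersecting with $a'$ is injective on $\Ext(\KK)$ and no coincidences occur.

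With both conditions of Proposition~\ref{p:prop2}(2) established, the increase is maximal and equal to $|\frak{B}(\KK)|$, which yields the claimed doubling. I expect the second condition to be the delicate point: the heuristic mentioned just before the statement avoids coincidences by deleting a single object (the case $m'=G\setminus\{g\}$), whereas here $a'$ deletes the whole set $\emptyset''$, which may contain several objects. The reason coincidences are still avoided is precisely that $\emptyset''$ lies inside every extent, so subtracting it merely strips a common bottom part from each extent without ever identifying two of them. Recognizing and exploiting this containment is the crux of the argument.
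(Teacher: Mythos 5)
Your proof is correct and is essentially the paper's own argument: the corollary is exactly the combination of Proposition~\ref{p:prop2}(2) with the equivalence $\bigl(\forall A\in\Ext(\KK):A\cap a'\notin\Ext(\KK)\bigr)\iff\emptyset''\setminus a'\neq\emptyset$ for the first condition, and the observation that $\emptyset''$ is contained in every extent, so $A\mapsto A\cap a'=A\setminus\emptyset''$ is injective on $\Ext(\KK)$, for the second. Your explicit injectivity step merely fills in the detail the paper leaves implicit (its preceding remark only mentions the special case $a'=G\setminus\{g\}$), but the route is the same.
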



Using these results we can now present some huge increases after generalizing only two attributes.

\section{Number of concepts generated by an $\exists $-generalization} \label{s:increase}
Let $\KK:=(G,M,\I)$ be a formal context. A generalized attribute of $\KK$ is a subset $s\subseteq M$. We denote by $S$ the set of generalized attributes of $\KK$. Since the final goal is to reduce the size of the lattice, we will assume that $S$ forms a partition of $M$~\footnote{It is also possible to allow some attributes to appear in different groups. In this case the number of generalized attributes can be larger than in the initial context}. 
 Then at least the number of attributes is reduced. For a $\exists$-generalization we say that an object $g$ has the generalized attribute $s$ iff $g$ has at least one of the attributes in $s$; i.e. $s'=\bigcup\{m'\mid m\in s\}$. We get a relation $\J$ on $G\times S$ defined by: 
 \[g\J s\iff \exists m\in s \text{ such that } g\I m.\] 
 In this section we look at a very simple case, where two attributes $a,b\in M$ are generalized to get a new one, say $s$. This means that from a context $(G,M,\I)$, we remove the attributes $a$ and $b$ from $M$ and add an attribute $s\notin M$ to $M$, with $s'=a'\cup b'$. 
 In particular we show that the number of concepts of $(G,(M\setminus\{a,b\})\dotcup\{s\},\I_s)$ can be extremely larger than that of $(G,M,\I)$.
%
%
\begin{figure}[h!]
	\begin{center}
		\begin{tabular}{|l||l|l|l|l|} \hline
			$\KK^1_2$& $1$ & $2$ & $m_{1}$ & $m_{2}$ \\ \hline \hline
			$1$ &  & $\times $ & $\times $ &  \\ \hline
			$2$ & $\times $ &  &  & $\times $ \\ \hline
			$g_{1}$ & $\times $ & $\times $ &  & \\ \hline
		\end{tabular}%
		\qquad \qquad
		\begin{tabular}{|l||l|l|l|l|} \hline
			$\KK^1_{2\text{ge}}$& $1$ & $2$ & $m_{12}$  \\ \hline \hline
			$1$ &  & $\times $ & $\times $   \\ \hline
			$2$ & $\times $ &  &   $\times $ \\ \hline
			$g_{1}$ & $\times $ & $\times $   & \\ \hline
		\end{tabular}%

	\includegraphics[scale=.45]{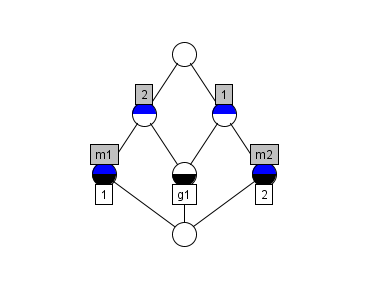} 
	\hspace{.1cm}
	\includegraphics[scale=.45]{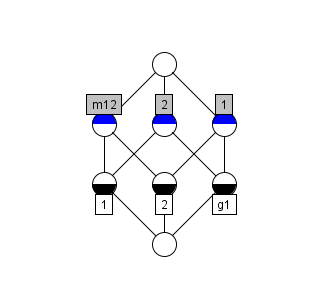} 
		\end{center} 
	\caption{$\frak{B}(\KK^1_2)$ (left) and $\frak{B}(\KK^1_{2\text{ge}})$ (right), as defined by~(\ref{e:k1g}) with $n=2$.} \label{fig:K12n=2}
\end{figure}

\noindent
By $S_{n}$ we denote a set with $n$ elements where $n\geq 2$, and write for simplicity
$S_{n}:=\{1,2,\cdots ,n\}$. We define a context $\KK^1_n$ by:
\begin{equation*}
\mathbb{K}^1_{n}:=(S_{n}\mathop{\mathaccent\cdot\cup}\{g_{1}\},S_{n}%
\mathop{\mathaccent\cdot\cup}\{m_{1},m_{2}\},\mathop{\mbox{\rm I}}) \quad\text{with}
\end{equation*}%

\begin{equation} \label{e:k1g}
g\mathop{\mbox{\rm I}}m:\iff
\begin{cases}
g,m\in S_{n} & \text{ and }g\neq m,\text{ or } \\
g=g_{1} & \text{ and }m\in S_{n},\text{ or } \\
g=1 & \text{ and }m=m_{1},\text{ or } \\
g\in S_{n}\setminus \{1\} & \text{ and }m=m_{2}.%
\end{cases}%
\end{equation}%
We generalize the attributes $m_{1}$ and $m_{2}$ to get $m_{12}$ and denote the resulting context by 
$\KK^1_{n\textup{ge}}:=(S_{n}\dotcup\{g_{1}\},S_{n} \dotcup\{m_{12}\},\I)$ with $m'_{12}=m'_1\cup m'_2$.
For the case $n=2$, the contexts and their concept lattices are displayed in Figure~\ref{fig:K12n=2}.

We want to compare the numbers of concepts of $\KK^1_{n\textup{ge}}$ and  that of $\KK^1_{n}$ and get the differences. The table below show some of these numbers:
\begin{table}
	\begin{center}
\begin{varwidth}[b]{0.6\linewidth}
	\begin{tabular}{c r r r r r r r r r} 
		\toprule
		$n$ & 2 & 3 & 4 & 5 & $\cdots$ & 10 & $\cdots$ & 20 & $\cdots$  \\ \hline
		$\left|\frak{B}(\KK^1_n)\right|$ & 7 & 13 & 25 & 49 & $\cdots$ & 1537 &$\cdots$ & 1572865 & $\cdots$   \\ \midrule
		$\left|\frak{B}(\KK^1_{n\textup{ge}})\right|$ & 8 & 16 & 32 & 64 &$\cdots$   & 2048 &$\cdots$ & 2097152 & $\cdots$  \\ \midrule
		$\left|\frak{B}(\KK^1_{n\textup{ge}})\right|-\left|\frak{B}(\KK^1_n)\right|$
		&1&3&7& 15 & $\cdots$  & 511 &$\cdots$ & 524287 & $\cdots$  \\ \bottomrule
	\end{tabular}
\end{varwidth}%
	\end{center}
\caption{Examples of increase after a $\exists$-generalization.}		
\end{table}

\paragraph{Notations:} We denote by $\I$ the restriction of the incidence relation of $\KK^1_n$ on any subcontext of $\KK^1_n$, and also by $\I$ the  incidence relation in the generalized context $\KK^1_{n\text{ge}}$. We set
\begin{align*}
\KK_{00}:=& (S_n\dotcup\{g_1\},S_n,\I),\\ 
\KK_{02}:=& (S_n\dotcup\{g_1\},S_n\dotcup\{m_2\},\I),\\ 
\KK_{01}:=& (S_n\dotcup\{g_1\},S_n\dotcup\{m_1\},\I), \\ 
\KK_{0s}:=& (S_n\dotcup\{g_1\},S_n\dotcup\{m_{12}\},\I) = \KK^1_{n\text{ge}},\\ 
\KK_{12}:=& (S_n\dotcup\{g_1\},S_n\dotcup\{m_1,m_2\},\I) = \KK^1_n.
\end{align*}

The context $\KK_{00}$ has $2^n$ concepts since $g_1$ is a reducible object in $\KK_{00}$ and the remaining context after removing $g_1$ is $(S_n,S_n,\neq)$. The context $\KK^1_n$ is obtained by adding successively $m_2$ to $\KK_{00}$ to get $\KK_{02}$, and then $m_1$ to $\KK_{02}$. The generalized context is obtained by adding $s=m_{12}$ to $\KK_{00}$. 

What happens when $m_2$ adds  to $\KK_{00}$? 
Every extent $A$ of $\KK_{00}$ is of the form $A=A_1\dotcup\{g_1\}$ with $A_1\subseteq S_n$ and satisfies ${A\cap m'_2\notin \Ext\left(\KK_{00}\right)}$. It therefore generates two concepts in $\KK_{02}$. The extents $A$ with $A_1\subseteq m'_2=\{2,\cdots n\}$ do not coincide on $m'_2$, and therefore generate $2^{n-1}$ concepts in $\KK_{02}$ that cannot be identified (via $\phi_{m_2}$) to any concept of $\KK_{00}$. If $A$ is an extent of  $\KK_{00}$ containing $1$ then $A\setminus\{1\}$ is also an extent of $\KK_{00}$, and both extents coincide on $m'_2$. Thus by Proposition~\ref{p:prop2} we get
  \begin{align*}
  \left|\frak{B}\left(\KK_{02}\right) \right|=2^n+2^{n-1}.
  \end{align*}
Now adding $m_1$ to $\KK_{02}$ generates at most two concepts, since $m'_1=\{1\}$ and 
\begin{align*}
\cal{H}(m_1)\subseteq 
\{A\cap m'_1\mid A\in\Ext\left(\KK_{02}\right)\} = \{\emptyset,m'_1\}
\end{align*}
The extents generated by $\{1\}$ in $\KK_{00}$ and in $\KK_{02}$ are equal to  $\{1,g_1\}$. 
Thus $\{1,g_1\}\cap m'_1 =m'_1 \notin\Ext\left(\KK_{02} \right)$. But $\emptyset\in \Ext\left( \KK_{02} \right)$ and $\emptyset\cap m'_1=\emptyset$. Therefore
  \begin{align*}
 \cal{H}(m_1)=\{m'_1\}\quad \text{ and }\quad
\left|\frak{B}(\KK_{12})\right|=2^n+2^{n-1}+1.
\end{align*}

\begin{figure}[h]
	\begin{center}
		\includegraphics[scale=0.7]{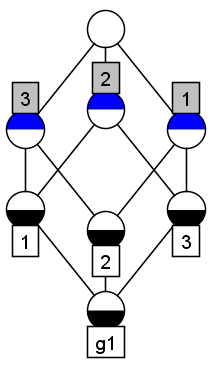} \hspace{5mm}
		\includegraphics[scale=0.7]{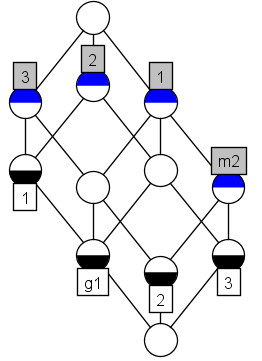}
	\end{center}

	\begin{center}
		\includegraphics[scale=1.1]{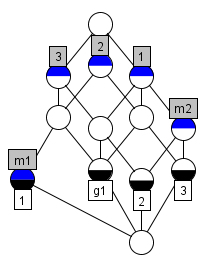} \hspace{5mm}
		\includegraphics[scale=1.1]{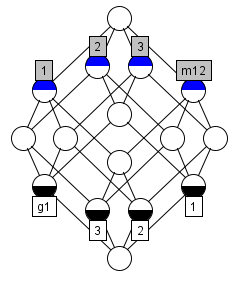}
	\end{center}
	\caption{$n=3$. \quad $\frak{B}\left(\KK_{00}\right)$ (upper left); \quad $\frak{B}\left(\KK_{02}\right)$ (upper right); \quad $\frak{B}\left(\KK_{12}\right)$ (down left)\quad and $\frak{B}\left(\KK_{0s}\right)$ (down right).
	}\label{fig:BK0012}
\end{figure}

 The context $(S_n\dotcup\{g_1\},S_n\dotcup\{m_{12}\},\I)$ is isomorphic to $(S_{n+1},S_{n+1},\neq)$. The object $g_1$ is identified with $n+1$ and the attribute $m_{12}$ with $n+1$. Thus generalizing $m_1$ and $m_2$ to $m_{12}$ leads to a lattice with $2^{n+1}$ concepts. The increase is then
 \begin{align*}
2^{n+1}-\left(2^n+2^{n-1}+1\right) = 2^{n-1}-1
 \end{align*}
 which is exponential in the number of attributes of the initial context. 
We can summarize the discussion above in the following proposition:
\begin{proposition} \label{p:prop3}
	Let $n\geq 2$ and $(S_n\dotcup\{g_1\},S_n\dotcup\{m_1,m_2\},\I)$ defined by (\ref{e:k1g}).  Generalizing the attributes $m_1$ and $m_2$ increases the size of the concept lattice by $2^{n-1}-1$. 
\end{proposition}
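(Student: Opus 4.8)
The statement claims that generalizing $m_1$ and $m_2$ into $m_{12}$ increases the concept-lattice size by exactly $2^{n-1}-1$. My strategy is to compute both $\left|\frak{B}(\KK^1_n)\right|$ and $\left|\frak{B}(\KK^1_{n\text{ge}})\right|$ separately by building up the contexts attribute-by-attribute, applying Proposition~\ref{p:prop2} at each step, and then subtracting. The two numbers I aim to establish are $\left|\frak{B}(\KK^1_n)\right|=2^n+2^{n-1}+1$ and $\left|\frak{B}(\KK^1_{n\text{ge}})\right|=2^{n+1}$, whose difference is $2^{n+1}-2^n-2^{n-1}-1=2^{n-1}-1$.

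\textbf{Step one: the base context.} First I would pin down $\left|\frak{B}(\KK_{00})\right|=2^n$. The point is that $g_1$ is incident with every attribute in $S_n$, so $g_1$ lies in every extent (i.e.\ $g_1\in\emptyset''$) and is a reducible object. After deleting $g_1$ the residual context is $(S_n,S_n,\neq)$, which as noted in the introduction has exactly $2^{|S_n|}=2^n$ concepts, each of the form $(A,S_n\setminus A)$. Re-attaching $g_1$ gives a bijection between these and the extents $A_1\dotcup\{g_1\}$ of $\KK_{00}$, so $\left|\frak{B}(\KK_{00})\right|=2^n$.

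\textbf{Step two: adding $m_2$, then $m_1$, to reach $\KK^1_n$.} To get $\left|\frak{B}(\KK_{02})\right|$ I would apply Proposition~\ref{p:prop2} with $a=m_2$ and $m_2'=\{2,\dots,n\}$. The task is to count $h(m_2)=|\mathcal{H}(m_2)|$. Every extent of $\KK_{00}$ has the form $A=A_1\dotcup\{g_1\}$; since $g_1\notin m_2'$ we get $A\cap m_2'=A_1\cap\{2,\dots,n\}=A_1\setminus\{1\}$, and because $g_1\in\emptyset''$ this intersection is never closed (by Proposition~\ref{p:prop3}(ii), as $\emptyset''\not\subseteq m_2'$). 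So the only subtlety is the coincidence-counting: two extents $A_1\dotcup\{g_1\}$ and $A_2\dotcup\{g_1\}$ coincide on $m_2'$ iff $A_1\setminus\{1\}=A_2\setminus\{1\}$, i.e.\ iff they differ only in whether they contain $1$. The distinct values $A_1\setminus\{1\}$ range over all $2^{n-1}$ subsets of $\{2,\dots,n\}$, giving $h(m_2)=2^{n-1}$ and $\left|\frak{B}(\KK_{02})\right|=2^n+2^{n-1}$. For the second step I apply Proposition~\ref{p:prop2} with $a=m_1$, $m_1'=\{1\}$. Now $\mathcal{H}(m_1)\subseteq\{A\cap\{1\}\mid A\in\Ext(\KK_{02})\}=\{\emptyset,\{1\}\}$, and $\emptyset$ is already an extent of $\KK_{02}$ while $\{1\}=\{1,g_1\}\cap m_1'$ is not; hence $h(m_1)=1$ and $\left|\frak{B}(\KK_{12})\right|=2^n+2^{n-1}+1$.

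\textbf{Step three: the generalized context and the conclusion.} For $\KK^1_{n\text{ge}}=\KK_{0s}$ I would argue directly that it is isomorphic to $(S_{n+1},S_{n+1},\neq)$. The generalized attribute satisfies $m_{12}'=m_1'\cup m_2'=\{1\}\cup\{2,\dots,n\}=S_n$, so under $m_{12}$ every original object $g\in S_n$ is incident (each such $g$ has some attribute in $S_n$, in fact $g\J m_{12}$ for all $g\in S_n$), while $g_1$ is \emph{not}, since $g_1\I m$ fails for both $m_1,m_2$. Identifying the object $g_1$ with a fresh index $n+1$ and the attribute $m_{12}$ with $n+1$, one checks the incidence of $\KK_{0s}$ matches $\neq$ on $S_{n+1}\times S_{n+1}$: for $g,m\in S_n$ it is $g\neq m$ by construction, the new object $g_1\leftrightarrow n+1$ is incident with exactly $S_n=S_{n+1}\setminus\{n+1\}$, and the new attribute $m_{12}\leftrightarrow n+1$ is satisfied by exactly $S_n=S_{n+1}\setminus\{n+1\}$. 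This is precisely the ``$\neq$'' pattern, so $\left|\frak{B}(\KK_{0s})\right|=2^{n+1}$. Subtracting yields the claimed increase $2^{n+1}-(2^n+2^{n-1}+1)=2^{n-1}-1$.

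\textbf{The main obstacle.} The genuinely delicate part is the coincidence count in Step two, since Proposition~\ref{p:prop2}(1) only guarantees $h(a)\le|\frak{B}(\KK)|$ and an overcount would inflate the final answer. I would be careful to verify that the map $A_1\mapsto A_1\setminus\{1\}$ collapses the $2^n$ extents to exactly $2^{n-1}$ distinct intersections with $m_2'$ (a clean two-to-one identification, pairing each subset not containing $1$ with its counterpart that does), and equally to confirm in Step one that $g_1$ really is forced into every extent. The isomorphism in Step three is conceptually clean but worth stating carefully, as the whole result hinges on $m_{12}'=S_n$ turning the generalized context into a pure $(\cdot,\cdot,\neq)$ context of size $n+1$.
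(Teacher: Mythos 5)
Your proof is correct and follows essentially the same route as the paper: the same incremental decomposition $\KK_{00}\to\KK_{02}\to\KK_{12}$ counted via Proposition~\ref{p:prop2} (including the two-to-one coincidence on $m_2'$ and the single new concept $\{1\}$ from $m_1$), and the same identification of the generalized context with $(S_{n+1},S_{n+1},\neq)$. No gaps; the coincidence count and the isomorphism, which you rightly flag as the delicate points, are handled exactly as in the paper's argument.
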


In the above construction of $\KK^1_{n\textup{ge}}$ the idea is to construct a context $(E,E\neq)$ with $|E|=n+1$ from the initial context. via a $\exists$-generalization. The objects in $S_n$ are split between $m_1$ and $m_2$ with no overlap.
We can choose a split that assigns $k$ objects of $S_n$ to $m_1$ and the other $n-k$ to $m_2$. Let \[\KK^k_{n}:=(S_{n}\cup
\{g_{1}\},S_{n}\cup \{m_{1},m_{2}\},\I) \]
 be such a context, where $\I$ is defined by
\begin{equation} \label{e:k2g} 
g\I m :\iff
\begin{cases}
g,m\in S_{n} & \mbox{ and }g\neq m, \mbox{ or }\\
g=g_{1} &\mbox{ and } m\in S_{n} \mbox{ or } \\
g\in \{1,2,...,k\} &\mbox{ and } m=m_{1} \mbox{ or } \\
g\in S_{n}\setminus \{1,2,...,k\} &\mbox{ and }m=m_{2}
\end{cases}
\end{equation}
Then the existential generalization of the attributes $m_{1}$ and $m_{2}$ to $m_{12}$ leads to the generalized context $\KK^k_{n\text{ge}}:=(S_{n}\cup \{g_{1}\},S_{n}\cup \{m_{12}\},\I)\cong (S_{n+1},S_{n+1},\neq)$.
To get the cardinality of $\frak{B}(\KK^k_n)$, we observe that
\begin{itemize}
	\item[(i)]   $\KK_{00}:=(S_{n}\cup \{g_{1}\},S_{n},\I)$ has $2^n$ concepts. The extents of $\KK_{00}$ are of the form $A\dotcup\{g_1\}$, $A\subseteq S_n$. 
	\item[(ii)]   $\KK_{02}:= (S_{n}\cup \{g_{1}\},S_{n}\cup \{m_{2}\},\I)$ has $2^n+2^{n-k}$ concepts. They are of the form $(A\dotcup\{g_1\},S_n\setminus A)$ with $A\subseteq S_n$ or the form $(A,(S_n\setminus A) \dotcup\{m_2\})$ with $A\subseteq m'_2$.  
	\item[(iii)] $\KK_{01}:=(S_{n}\cup \{g_{1}\},S_{n}\cup \{m_{1}\},\I)$ has $2^n+2^k$ concepts, which are of the form $(A\dotcup\{g_1\},S_n\setminus A)$ with $A\subseteq S_n$ or the form  $(A,(S_n\setminus A) \dotcup\{m_1\})$ with $A\subseteq m'_1$.
\end{itemize}

$\KK_{12}:=(S_{n}\cup \{g_{1}\},S_{n}\cup \{m_1,m_2\},\I)$ is obtained from $\KK_{02}$ by adding $m_1$. Therefore we need to compute $\cal{H}(m_1)$ with respect to $\KK_{02}$. Let  $A\in\Ext(\KK_{02})$. We distinguish two cases:

 \begin{itemize}
 	\item[(i)] If $g_1\notin A$, then $A\subseteq m'_2$, and $A\cap m'_1=\emptyset$ is an extent of $\KK_{02}$. No new concept is generated. 
 	\item[(ii)] If $g_1\in A$, then the extent $A$ is of the form $A=A_1\dotcup\{g_1\}$ with $A_1\subseteq S_n$. 
Since $m'_1\cap m'_2=\emptyset$, we get 
\begin{align*}
A_1\cap m'_1 \notin \Ext\left(\KK_{02}\right) &\iff A_1\cap m'_1 \nsubseteq m'_2 
\end{align*}
Thus the number of additional concept generated is 
\[\left|\left\{A\cap m'_1 \mid A\in\Ext \left(\KK_{02}\right) \text{ and } A\cap m'_1 \nsubseteq m'_2
\right\} \right|
\]
 Among the extents of $\KK_{02}$ with $A\cap m'_1 \nsubseteq m'_2$, there are $2^k-1$ that do not coincide on $m'_1$, for example those with $\emptyset\neq A_1\subseteq m'_1$. This means that adding $m_1$ to $\KK_{02}$ will generate $2^k-1$ new concepts that cannot be identified with concepts in $\KK_{02}$. Therefore $\KK^k_{n}$ has $2^{n}+2^{n-k}+2^{k}-1$ concepts. 
 \end{itemize}
\begin{proposition} \label{p:prop5}
	Let $n\geq 2$,  $1\le k <n$ and $\KK^k_n$ defined by (\ref{e:k2g}).
	\begin{itemize}
		\item[a)] The context $\KK^k_{n}$ has $2^{n}+2^{n-k}+2^{k}-1$ concepts.
		\item[b)] Generalizing $m_1$ and $m_2$ increases the number of concepts by $$2^{n}-2^{k}-2^{n-k}+1.$$
	\end{itemize}
\end{proposition}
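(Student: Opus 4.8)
The plan is to assemble $\KK^k_n$ from the base context $\KK_{00}$ by first adjoining $m_2$ and then $m_1$, applying the counting formula of Proposition~\ref{p:prop2} at each stage, and finally to read $|\frak{B}(\KK^k_{n\textup{ge}})|$ off an isomorphism with $(S_{n+1},S_{n+1},\neq)$. From~(\ref{e:k2g}) one reads $m'_1=\{1,\dots,k\}$ and $m'_2=\{k+1,\dots,n\}$, so that $m'_1\cap m'_2=\emptyset$, $|m'_1|=k$ and $|m'_2|=n-k$; this disjointness is the structural fact driving the whole count. Since $g_1$ carries every attribute of $S_n$ and the subcontext on $S_n$ is $(S_n,S_n,\neq)$, the extents of $\KK_{00}$ are exactly the sets $A\dotcup\{g_1\}$ with $A\subseteq S_n$, so $|\frak{B}(\KK_{00})|=2^n$, and $g_1\in\emptyset''$ while $g_1\notin m'_2$.

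Adjoining $m_2$ comes next. Because $g_1\in\emptyset''\setminus m'_2$, the equivalence (i)$\Leftrightarrow$(ii) established above shows that $A\cap m'_2\notin\Ext(\KK_{00})$ for every extent $A$, so each extent of $\KK_{00}$ spawns one new concept; the distinct values $A\cap m'_2$ range over all subsets of $m'_2$, whence $h(m_2)=2^{n-k}$ and, by Proposition~\ref{p:prop2}, $|\frak{B}(\KK_{02})|=2^n+2^{n-k}$. I would record here the resulting extents of $\KK_{02}$, namely the old ones $A\dotcup\{g_1\}$ with $A\subseteq S_n$ together with the new ones $A$ with $A\subseteq m'_2$.

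The decisive step is adjoining $m_1$ and computing $\mathcal{H}(m_1)$ relative to $\KK_{02}$. Here disjointness does the work: for any extent $E$ of $\KK_{02}$ the set $E\cap m'_1$ lies inside $m'_1$, hence meets $m'_2$ only in $\emptyset$, so $E\cap m'_1$ is an extent of $\KK_{02}$ exactly when it is empty. Therefore $\mathcal{H}(m_1)$ consists precisely of the nonempty subsets of $m'_1$ (each realized by some old extent $A\dotcup\{g_1\}$ with $\emptyset\neq A\subseteq m'_1$), giving $h(m_1)=2^k-1$; Proposition~\ref{p:prop2} then yields $|\frak{B}(\KK^k_n)|=2^n+2^{n-k}+2^k-1$, which is part~(a). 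For part~(b) I would check that $i\mapsto i$ on $S_n$, together with $g_1\mapsto n+1$ and $m_{12}\mapsto n+1$, is an isomorphism from $\KK^k_{n\textup{ge}}$ (where $m'_{12}=m'_1\cup m'_2=S_n$) onto $(S_{n+1},S_{n+1},\neq)$, which has $2^{n+1}$ concepts; subtracting the count of part~(a) gives the increase $2^{n+1}-(2^n+2^{n-k}+2^k-1)=2^n-2^{n-k}-2^k+1$. I expect the only genuine obstacle to be the coincidence bookkeeping at this last step, that is, ensuring that distinct extents of $\KK_{02}$ collapsing to the same $E\cap m'_1$ are counted once; the disjointness of $m'_1$ and $m'_2$ settles this cleanly by reducing $\mathcal{H}(m_1)$ to the nonempty subsets of $m'_1$.
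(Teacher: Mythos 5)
Your proposal is correct and follows essentially the same route as the paper: build $\KK^k_n$ from $\KK_{00}$ by adjoining $m_2$ and then $m_1$, count the new concepts via Proposition~\ref{p:prop2} (using the disjointness $m'_1\cap m'_2=\emptyset$ to identify $\mathcal{H}(m_1)$ with the nonempty subsets of $m'_1$), and obtain the generalized count from the isomorphism $\KK^k_{n\textup{ge}}\cong(S_{n+1},S_{n+1},\neq)$. Your use of Proposition~\ref{p:prop3} (the $\emptyset''\setminus a'\neq\emptyset$ criterion) to handle the $m_2$ step is a slightly cleaner justification of what the paper asserts directly, but it is the same argument in substance.
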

\noindent
A natural question here is: which $\KK^k_n$ has a maximal increase? 
The increase by an $\exists$-generalization that puts $m_1$ and $m_2$ together in $\KK^k_n$ is $$f_n(k):=2^n-2^k-2^{n-k}+1.$$ This function is convex and its slope vanishes at $k=\frac{n}{2}$. 
	\begin{align*}
\frac{d}{dk}f_n(k) &= -\ln(2) 2^k+\ln(2)2^{n-k} = 0 \iff n=2k.\\
\frac{d^2}{dx^2}f_n(k) &= -\ln^2(2)2^k -\ln^2(2)2^{n-k} <0.
	\end{align*}
\noindent
 The maximum is reached when the objects are evenly split; i.e $k=\frac{n}{2}$ for $n$ even, or $k\in\left\{\lfloor \frac{n}{2}\rfloor, \lfloor \frac{n}{2}\rfloor +1\right\}$ for $n$ odd. That is the case for the context $$\KK^{\lfloor \frac{n}{2}\rfloor}_n:=(S_n\dotcup\{g_1\},S_n\dotcup\{m_1,m_2\},\I)$$ with \begin{align*}
g\I m \iff \begin{cases} 
g,m\in S_n &\text{ and } g\neq m, \text{ or }\\ 
g=g_1 &\text{ and } m\in S_n, \text{ or } \\
g\in \{1,\cdots,\lfloor \frac{n}{2} \rfloor\} & \text{ and } m= m_1, \text{ or } \\ 
g\in S_n\setminus \{1,\cdots,\lfloor\frac{n}{2}\rfloor\} &\text{ and } m= m_2.
\end{cases}
\end{align*}
If $n=2q$, then the increase is  $f_{2q}(q) = 2^{2q}-2\cdot 2^q+1 = \left(2^q-1\right)^2$. If $n=2q+1$, then the increase is  $f_{2q+1}(q) = 2^{2q+1}-2^q-2^{q+1}+1 = \left(2^q-1\right)\left(2^{q+1}-1\right)$.

We could allow overlap in constructing $\KK^k_n$ by using any covering of $S_n$ with two proper subsets $m'_1$ and $m'_2$; this means
$$
m_1'\cup m_2' = S_n \text{ with } \emptyset \subsetneq m'_1, m'_2\subsetneq S_n.
$$
An $\exists$-generalization that puts the attributes $m_1$ and $m_2$ together to get $m_{12}$, will also have $2^{n+1}$ concepts. However the concept lattice of $\KK_n$ will have more concepts when $m'_1\cap m'_2\neq\emptyset$ compared to when $m'_1\cap m'_2=\emptyset$. 
The minimal increase in that case is achieved with $\left| m'_1 \right| = n-2 =\left|m'_2 \right|$  and $\left| m'_1\cap m'_2 \right|=n-3$. 

Let $\KK_{12}:=(S_{n}\cup \{g_1\},S_{n}\cup \{m_{1},m_{2}\},\I)$ with $m'_1\cap m'_2\neq \emptyset$. If $m'_1\subseteq m'_2$ or $m'_2\subseteq m'_1$ then putting 
$m_{1}$ and $m_{2}$ together will not increase the size of the concept lattice.
 Therefore, we assume that  $m'_1\parallel m'_2$. 

\begin{proposition}\label{p:prop6}
	Let $n>2$ and $\KK_{s}$ the generalized context obtained from $\KK_{12}$ by putting $m_1$ and $m_2$ together. Then:
	\begin{enumerate}
		\item  The size of the concept lattice of the context $\KK_n$ is 
		 $$2^{n}+2^{|m'_{2}|}+2^{|m'_{1}|}-2^{|m'_2\cap m'_1|}.$$
		\item After the generalization, the size of the initial lattice increases by 
		\[2^{n}-2^{|m'_1|}-2^{|m'_2|}+2^{|m'_1\cap m'_2|}.\]
	\end{enumerate}
\end{proposition}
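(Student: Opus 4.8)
The plan is to reuse the incremental strategy of Section~\ref{s:new_a}, building $\KK_{12}$ from $\KK_{00}$ by adding $m_2$ and then $m_1$ (applying Proposition~\ref{p:prop2} at each step), and then to read off $|\frak{B}(\KK_s)|$ from an isomorphism with $(S_{n+1},S_{n+1},\neq)$. First I would record the starting point: since $g_1\I m$ for every $m\in S_n$ while no $i\in S_n$ satisfies its own attribute, the object $g_1$ lies in every extent, so $\emptyset''=\{g_1\}$ and the extents of $\KK_{00}$ are exactly the sets $A\dotcup\{g_1\}$ with $A\subseteq S_n$; hence $|\frak{B}(\KK_{00})|=2^n$.

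Next I would add $m_2$. Because $m'_2\subseteq S_n$ does not contain $g_1$, every intersection $(A\dotcup\{g_1\})\cap m'_2=A\cap m'_2$ misses $g_1$ and is therefore never an extent of $\KK_{00}$; as $A$ runs over all subsets of $S_n$ these intersections run over all subsets of $m'_2$, so $\cal{H}(m_2)=\{B\mid B\subseteq m'_2\}$ and $h(m_2)=2^{|m'_2|}$. I would then record the full extent list: $\Ext(\KK_{02})$ consists of the sets $A\dotcup\{g_1\}$ (with $A\subseteq S_n$) together with the subsets $B\subseteq m'_2$, giving $|\frak{B}(\KK_{02})|=2^n+2^{|m'_2|}$.

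The crux is the second step, adding $m_1$ to $\KK_{02}$, where I must compute $\cal{H}(m_1)$ with respect to $\KK_{02}$. For an extent $A\dotcup\{g_1\}$ the intersection with $m'_1$ is $A\cap m'_1$, which ranges over all subsets of $m'_1$; for an extent $B\subseteq m'_2$ the intersection $B\cap m'_1$ ranges over subsets of $m'_1\cap m'_2$ and is thus already covered. Hence the set of intersection values is exactly $\{C\mid C\subseteq m'_1\}$. Among the extents of $\KK_{02}$ the only ones missing $g_1$ are the subsets of $m'_2$, so a set $C\subseteq m'_1$ already belongs to $\Ext(\KK_{02})$ precisely when $C\subseteq m'_1\cap m'_2$. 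The genuinely new extents are therefore the $C\subseteq m'_1$ with $C\nsubseteq m'_1\cap m'_2$, yielding $h(m_1)=2^{|m'_1|}-2^{|m'_1\cap m'_2|}$. Adding the two increases to $2^n$ gives the count $2^n+2^{|m'_2|}+2^{|m'_1|}-2^{|m'_1\cap m'_2|}$ claimed in part~(1). For part~(2) I would note that, since $m'_1\cup m'_2=S_n$, in $\KK_s$ the object $g_1$ carries every attribute of $S_n$ but not $m_{12}$, while each $i\in S_n$ carries $m_{12}$; identifying both $g_1$ and $m_{12}$ with a fresh element $n+1$ exhibits $\KK_s\cong(S_{n+1},S_{n+1},\neq)$, so $|\frak{B}(\KK_s)|=2^{n+1}$, and subtracting the part~(1) count gives the increase $2^{n+1}-(2^n+2^{|m'_1|}+2^{|m'_2|}-2^{|m'_1\cap m'_2|})=2^n-2^{|m'_1|}-2^{|m'_2|}+2^{|m'_1\cap m'_2|}$.

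The main obstacle is the bookkeeping in the $m_1$ step: one must simultaneously track which intersections $X\cap m'_1$ coincide (so as not to overcount new concepts) and which of them already occur as extents of $\KK_{02}$ (so as not to mistake old concepts for new). The standing assumption $m'_1\parallel m'_2$ guarantees $m'_1\cap m'_2\subsetneq m'_1$, which makes the increase $2^{|m'_1|}-2^{|m'_1\cap m'_2|}$ strictly positive and is exactly what fails in the comparable case, matching the remark that $m'_1\subseteq m'_2$ or $m'_2\subseteq m'_1$ produces no increase.
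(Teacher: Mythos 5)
Your proof is correct and follows essentially the same route as the paper's: an incremental count via Proposition~\ref{p:prop2} (starting from $\KK_{00}$ with $2^n$ concepts, adding one attribute, then computing $\mathcal{H}$ of the second attribute relative to the intermediate context), followed by the isomorphism $\KK_s\cong(S_{n+1},S_{n+1},\neq)$ to get $2^{n+1}$ generalized concepts. The only difference is that you add $m_2$ first and then $m_1$, whereas the paper adds $m_1$ first and computes $\mathcal{H}(m_2)$ with respect to $\KK_{01}$; by symmetry this is immaterial, and your characterization of the already-existing intersections (namely $C\subseteq m'_1$ is an extent of $\KK_{02}$ iff $C\subseteq m'_1\cap m'_2$) is in fact stated more cleanly than the corresponding step in the paper.
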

Before we start with the proof we look at a concrete case with $n=3$. 
Its context is isomorphic to 
\begin{center}
	\begin{tabular}{|l|l|l|l|l|l|} \hline
		 & $1$ & $2$ & $3$ & $m_{1}$ & $m_{2}$ \\ \hline
		$1$ &  & $\times $ & $\times $ & $\times $ &  \\ \hline
		$2$ & $\times $ &  & $\times $ & $\times $ & $\times $ \\ \hline
		$3$ & $\times $ & $\times $ &  &  & $\times $ \\ \hline
		$g_{1}$ & $\times $ & $\times $ & $\times $ &  & \\ \hline
	\end{tabular}
\end{center}
and has $14$ concepts. Note that $14=2^{3}+2^{2}+2^{2} -2^{1}$.

\begin{proof}
	$\KK_{12}:=(S_{n}\dotcup \{g_1\},S_{n}\dotcup \{m_{1},m_{2}\},\I)$ has $2^{n}+2^{|m'_1|} + \cal{H}(m_2)$ concepts, where $\cal{H}(m_2)$ is to be determined  with respect to $\KK_{01}:=(S_{n}\cup \{g_{1}\},S_{n}\cup \{m_1\},\I)$.
	The concepts of $\KK_{01}$ are of the form $\left(A_1\dotcup\{g_1\},S_n\setminus A_1\right)$ with $A_1\subseteq S_n$ or of the form $\left(A_1,(S_n\setminus A_1)\cup\{m_1\} \right)$ with $A_1\subseteq m'_1$. Let $A\in\Ext\left(\KK_{01}\right)$.
\begin{itemize}
	\item If $g_1\notin A$ then $A\subseteq m'_1$ and $A\cap m'_2$ is a subset of $m'_1$, and by then an extent of $\KK_{01}$. No new concept is generated. 
	\item If $g_1\in A$ then $A=A_1\dotcup\{g_1\}$ with $A_1\subseteq S_n$. Then 
	\[A\cap m'_2\notin\Ext\left(\KK_{01}\right) \iff A\cap m'_2\subseteq m'_2\setminus m'_1.\]
\end{itemize}
Thus, adding $m_2$ to $\KK_{01}$ will generate $2^{|m'_2|}-2^{|m'_1\cap m'_2|}$ new concepts that cannot be identified (via $\phi_{m_2}$) with concepts in $\KK_{01}$. Then $\KK_{12}$ has $$2^{n}+2^{|m'_1|}+2^{|m'_2|}-2^{|m'_1\cap m'_2|}$$ concepts.
The increase of the size of the lattice is then
\begin{align*}
\left|\frak{B}(\KK_{12}) \right|- \left|\frak{B}(\KK_{01}) \right| &= 2^{n+1}-\left(2^{n}+2^{|m'_1|}+2^{|m'_2|}-2^{|m'_1\cap m'_2|} \right) \\
 &= 2^{n}-2^{|m'_1|}-2^{|m'_2|}+2^{|m'_1\cap m'_2|}
\end{align*} \qed
\end{proof}

\begin{remark}  
Note that $n=|m'_1\cup m'_2|$ and the increase is 
\[ 2^{|m'_1\cup m'_2|}-2^{|m'_1|}-2^{|m'_2|}+2^{|m'_1\cap m'_2|}
\]
which is a general formula that holds, even if $m'_1\cup m'_2\neq S_n$.
The starting context is $\KK_{00}:=(S_n\dotcup\{g_1\},S_n,\I)$ and has $2^n$ extents. After adding an attribute $m_1$ to $\KK_{00}$ we increase the number of extents by $2^{|m'_1|}$. After adding $m_2$ to $\KK_{00}$ we increase the number of extents by $2^{|m'_2|}$. After adding an attribute $s$ with $s'=m'_1\cup m'_2$ 
to $\KK_{00}$ we increase the extents by $2^{|m'_1\cup m'_2|}$.
If we add an attribute $t$ with $t'=m'_1\cap m'_2$ to $\KK_{00}$ we will increase the extents by $2^{|m'_1\cap m'_2|}$. But these extents ''appear'' already when $m_1$ or $m_2$ is added to $\KK_{00}$, and are therefore counted twice when both $m_1$ and $m_2$ are added to $\KK_{00}$. 
\end{remark}

\begin{remark}
The counting with $\KK_{12}$ has been made easy by the fact that each ''subset'' of $S_n$ identifies an extent of $\KK_{00}$. If  $m'_1\cap m'_2$ is not empty, then $\KK_{12}$ has more concepts while the number of generalized concept remains the same. Then the condition $m'_1\cap m'_2=\emptyset$ is necessary (but not sufficient) to get the maximal increase. For the contexts $\KK_{12}$, putting $m_1$ and $m_2$ together can increase the the size of the concept lattice by up to $\left(2^{\lfloor \frac{n}{2}\rfloor }-1\right)\left(2^{\lceil\frac{n}{2}\rceil}-1\right)$ concepts. Is this the maximal increase for contexts of similar size?
\end{remark}

\begin{remark}
	Note that all contexts $\KK_{12}$ we have constructed are reduced. Requiring the contexts to be reduced is a fair assumption. If not then we should first remove reducible attributes before processing with a generalization. This removal does not affect the size of the concept lattice. However putting together two reducible attributes will for sure not decrease the size, but probably increases it.
\end{remark}

\begin{remark}
		$B_4$ is the smallest lattice for which there are two attributes whose $\exists$-generalization increases the size of the concept lattice. All lattices presented in this section contain a labelled copy of $B_{4}$ (as subposet!). Is there any characterization of contexts for which generalizing increases the size, for example in terms of forbidden subcontexts or subposets? 
\end{remark}
\begin{table}[ht]
	\begin{center}

	\end{center}
\begin{minipage}[b]{0.5\linewidth}
	\includegraphics[scale=.7]{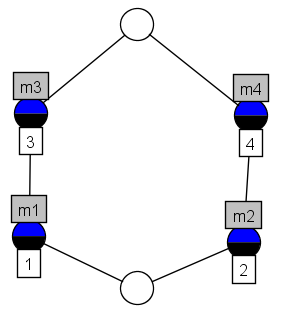}
	\captionof{figure}{A concept lattice for $B_4$.} \label{fig:B4}
\end{minipage}\hspace{3mm}
\begin{minipage}[b]{0.5\linewidth}
		\includegraphics[scale=.7]{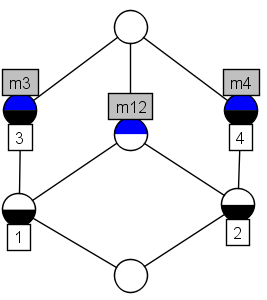}
		\captionof{figure}{Generalizing $m_1$ and $m_2$.} \label{fig:1}
	\end{minipage}
\end{table}


In this section we have found out that the size of the generalized concept lattice can be exponentially larger than that of the initial concept lattice
after an existential generalization. In the next section we will discuss the maximum of the increase after a $\exists$-generalization.

\section{Maximum increase after an existential generalization} \label{s:max_increase}
In this section we investigate the maximal increase in the general case. This means that from a context $\KK:=(G,M,\I)$ that is attribute reduced, two attributes $a,b$ are removed and replaced with an attribute $s$ defined by $s'=a'\cup b'$. We set $M_0=M\setminus\{a,b\}$ and adopt the following notation:
\begin{align*}
\KK_{00}:=& (G,M_0,\I), & \text{(removing $a,b$ from $\KK$)}\\ 
\KK_{01}:=& (G,M_0\dotcup\{a\},\I), & \text{(adding $a$ to $\KK_{00}$)} \\ 
\KK_{02}:=& (G,M_0\dotcup\{b\},\I), & \text{(adding $b$ to $\KK_{00}$)} \\ 
\KK_{0s}:=& (G,M_0\dotcup\{s\},\I), & \text{(generalized context)} \\ 
\KK_{12}:=& (G,M_0\dotcup\{a,b\},\I)=\KK. & \hfill \text{(initial context)}
\end{align*}
In general we get context $\KK$ by adding $a$ to $\KK_{00}$ and get $\KK_{01}$, and add $b$ to $\KK_{01}$. Recall that if an attribute $m$ is added to any context $\KK$, then the number of concepts increase by
 \[ h(m)=|\{A\cap m'\mid A\in\Ext(\KK) \text{ and } A\cap m'\notin \Ext(\KK)\}|
 \]
 We denote by $a\cap b$ the attribute defined by $(a\cap b)':=a'\cap b'$, and by $a\cup b$ the attribute defined by $(a\cup b)':=a'\cup b'=s'$.  We start from $\KK_{00}$. 
 Adding the attribute $a$ to $\KK_{00}$ increases its number of concepts by 
 \[h(a)=|\{A\cap a'\mid A\in \Ext(\KK_{00}) \text{ and } A\cap a'\notin \Ext(\KK_{00})  \}| \le 2^{|a'|}.
 \]
 Adding the attribute $b$ to $\KK_{00}$ increases its number of concepts by 
 \[h(b)=|\{A\cap b'\mid A\in \Ext(\KK_{00}) \text{ and } A\cap b'\notin \Ext(\KK_{00})  \}| \le 2^{|b'|}.
 \]
 Adding the attribute $a\cap b$ to $\KK_{00}$ increases its number of concepts by 
 \[h(a\cap b)=|\{A\cap a'\cap b' \mid A\in \Ext(\KK_{00}) \text{ and } A\cap a'\cap b'\notin \Ext(\KK_{00})  \}| \le 2^{|a'\cap b'|}.
 \]
 But these concepts appear in $\cal{H}(a)$ and $\cal{H}(b)$ and will be counted twice.  
 If $a'\cap b'$ is empty then $h(a\cap b)\le 1$. 

\bigskip 
\noindent
 Adding the attribute $a\cup b$ to $\KK_{00}$ increases its number of concepts by 
 \begin{align*}
 h(a\cup b) &=|\{A\cap (a'\cup b')\mid A\in \Ext(\KK_{00}) \text{ and } A\cap (a'\cup b')\notin \Ext(\KK_{00})\}|\\
 &\le 2^{|a'\cup b'|} \le 2^{|a'|+|b'|}.
 \end{align*}
 If $h(a\cup b)=2^{|a'|+|b'|}$ then $a'\cap b'=\emptyset$ and each subset of $a'\cup b'$ is not an extent of $\KK_{00}$, but is the restriction of an extent of $\KK_{00}$ on $a'\cup b'$. In this case $h(a)=2^{|a'|}$, $h(b)=2^{|b'|}$ and $h(a\cap b)=1$. 

 \bigskip
 \noindent
We denote by $h(a,b)$ the increase when two attributes $a$ and $b$ are both added to $\KK_{00}$. Then we have 
\begin{align*}
\left|\frak{B}(\KK_{12})\right| &= \left|\frak{B}(\KK_{01})\right|+ h(b)-h(a\cap b)\\ 
& =\left|\frak{B}(\KK_{00})\right| + h(a) + h(b)-h(a\cap b).
\end{align*}
and 
\[ h(a,b)=h(a)+h(b)-h(a\cap b).\]
The increase is then 
\begin{align*}
\left|\frak{B}(\KK_{0s})\right| -\left|\frak{B}(\KK)\right| &= h(a\cup b)-h(a,b)	\\
&= h(a\cup b) -h(a) - h(b)+h(a\cap b)
\end{align*}
If $h(a\cup b)=2^{|a'|+|b'|}$ then the increase is 
\[
h(a\cup b) -h(a) - h(b)+h(a\cap b) = 2^{|a'|+|b'|} -2^{|a'|} - 2^{|b'|} + 1.
\] 
Now we are going to show that this increase is the least upper bound. 
Since we are interested in the maximal increase, we assume that $a'\cap b'=\emptyset$. In fact $\KK$ has more concepts when $a'\cap b'\neq \emptyset$, than when $a'\cap b'=\emptyset$; But the number of concepts of $\KK_{0s}$ will remain the same in both cases. The increase $\left|\frak{B}(\KK_{0s})\right| -\left|\frak{B}(\KK)\right|$ is then larger if $\left|\frak{B}(\KK)\right|$ is smaller. 

\bigskip
\noindent
There exists $d_1\leq 2^{|a'|}$ such that $h(a)= 2^{|a'|}-d_1$. 
In fact $$d_1=|\{A\subseteq a'\mid A\in\Ext(\KK_{00}) \}|.$$ 
Similarly, there exists $d_2\leq 2^{|b'|}$ such that $h(b)= 2^{|b'|}-d_2$. 
As above we have  $$d_2=|\{A\subseteq b'\mid A\in\Ext(\KK_{00}) \}|.$$ 
Similarly, there exists $d_0\leq 2^{|a'\cup b'|}$ such that $|\cal{H}(s)|= 2^{|a'\cup b'|}-d_0=2^{|a'|+|b'|}-d_0$, 
with $$d_0=|\{A\subseteq a'\cup b'\mid A\in\Ext(\KK_{00}) \}|.$$ 
Since we assume $a'\cap b'=\emptyset$, the following holds for any extent $A\ne \emptyset$ of $\KK_{00}$: 
\begin{align*}
A\subseteq a'\cup b'\iff & A\subseteq a' \xor  A\subseteq b' \xor A\subseteq a'\cup b',\  A\nsubseteq a'\text{ and } A\nsubseteq b'.	
\end{align*}
where $\xor$ denotes the exclusive or. Therefore $d_1+d_2\le d_0 $ and the increase is then
\begin{align*}
\left|\frak{B}(\KK_{0s})\right| -\left|\frak{B}(\KK)\right| &= h(a\cup b)-h(a,b)	\\
&= h(a\cup b) -h(a) - h(b)+h(a\cap b)\\
&=\left(2^{|a'|+|b'|}-d_0\right) - \left(2^{|a'|}-d_1\right)-\left(2^{|b'|}-d_1\right)+h(a\cap b) \\
&=2^{|a'|+|b'|}-2^{|a'|}-2^{|b'|}+ h(a\cap b) + \underbrace{d_1+d_2-d_0}_{\le 0} \\
&\le 2^{|a'|+|b'|}-2^{|a'|}-2^{|b'|}+ h(a\cap b)\\
&\le 2^{|a'|+|b'|}-2^{|a'|}-2^{|b'|}+ 1,
\end{align*}
since $h(a\cap b)\in\{0,1\}$ when $a'\cap b'=\emptyset$.

\begin{theorem}
	Let $(G,M,\I)$ be an attribute reduced context 
	and $a, b$ two attributes such that their existential generalization $s=a\cup b$ increases the size of the concept lattice. 
	Then
	\begin{itemize}
\item[(i)] $|\frak{B}(G,M,\I)| = |\frak{B}(G,M\setminus\{a,b\},\I)|+h(a,b)$, with 
\[ h(a,b)=h(a)+h(b)-h(a\cap b).\]
\item[(ii)] The increase after generalizing is 
\begin{align*}
h(a\cup b)-h(a)-h(b)+h(a\cap b)
 \le 2^{|a'|+|b'|}-2^{|a'|}-2^{|b'|}+1.
\end{align*}
	\end{itemize}
\end{theorem}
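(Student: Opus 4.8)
The plan is to prove both statements by adjoining attributes to $\KK_{00}$ one at a time and applying Proposition~\ref{p:prop2} at each step. For part (i), I would realise $\KK = \KK_{12}$ as the result of first adjoining $a$ to $\KK_{00}$, producing $\KK_{01}$, and then adjoining $b$. The first step is immediate: Proposition~\ref{p:prop2} gives $|\frak{B}(\KK_{01})| = |\frak{B}(\KK_{00})| + h(a)$, with $h(a)$ computed in $\KK_{00}$. Everything then reduces to computing the increase caused by the second step.

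For that second step I would exploit the structure of $\Ext(\KK_{01})$: every extent of $\KK_{01}$ is either an extent of $\KK_{00}$ or of the form $A \cap a'$ with $A \in \Ext(\KK_{00})$. Restricting these to $b'$ produces two families of candidate new extents, $\{A \cap b' : A \in \Ext(\KK_{00})\}$ and $\{A \cap a' \cap b' : A \in \Ext(\KK_{00})\}$. The genuinely new members of the first family are exactly those counted by $\cal{H}(b)$ in $\KK_{00}$, while the second family is the one counted by $\cal{H}(a\cap b)$, where $a\cap b$ denotes the attribute with extent $a'\cap b'$. The key point is that the members of the second family already lie in $\Ext(\KK_{01})$, since they were created when $a$ was adjoined, so they must be subtracted once. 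This yields $|\frak{B}(\KK)| - |\frak{B}(\KK_{01})| = h(b) - h(a\cap b)$, and summing the two steps gives $h(a,b) = h(a) + h(b) - h(a\cap b)$, which is part (i).

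For part (ii) I would first reduce to the case $a'\cap b' = \emptyset$: allowing $a'\cap b' \neq \emptyset$ only adds concepts to $\KK$ while leaving $|\frak{B}(\KK_{0s})|$ unchanged, so it can only decrease the increase, and the extremal configuration is the disjoint one. Then I would substitute into the formula of part (i) the expressions $h(a) = 2^{|a'|} - d_1$, $h(b) = 2^{|b'|} - d_2$ and $h(a\cup b) = 2^{|a'|+|b'|} - d_0$, where $d_1, d_2, d_0$ count the extents of $\KK_{00}$ contained in $a'$, $b'$ and $a'\cup b'$ respectively, using $|a'\cup b'| = |a'|+|b'|$. The increase $h(a\cup b) - h(a) - h(b) + h(a\cap b)$ then equals $2^{|a'|+|b'|} - 2^{|a'|} - 2^{|b'|} + h(a\cap b) + (d_1+d_2-d_0)$. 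Two facts finish the estimate: first, $h(a\cap b)\le 1$, since $a'\cap b'=\emptyset$ makes $\emptyset$ the only possible new restriction; second, $d_1+d_2\le d_0$, which follows because, $a'$ and $b'$ being disjoint, each nonempty extent contained in $a'\cup b'$ falls into exactly one of the classes ``inside $a'$'', ``inside $b'$'', ``inside $a'\cup b'$ but neither'', so the extents counted by $d_1$ and by $d_2$ embed disjointly among those counted by $d_0$. Combining yields the bound $2^{|a'|+|b'|}-2^{|a'|}-2^{|b'|}+1$.

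The step I expect to be the main obstacle is making the counting in both parts exact rather than heuristic. In part (i) one must verify that the restrictions $A\cap a'\cap b'$ are \emph{precisely} the extents double-counted by the ``$a$-phase'' and the ``$b$-phase'', with no coincidence left unaccounted; in part (ii) the equalities $h(\cdot)=2^{|\cdot|}-d$ are tight only when every subset of the relevant attribute extent actually occurs as the restriction of some extent of $\KK_{00}$, which is exactly the regime producing the maximal increase. The empty extent needs particular care: whether or not $\emptyset \in \Ext(\KK_{00})$ shifts the lone unit between the terms $h(a\cap b)$ and $d_1+d_2-d_0$, and it is the cancellation of these two contributions that produces the clean ``$+1$'' in the final bound. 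Checking this cancellation, and that no extent is miscounted when $a'$ and $b'$ simultaneously meet several extents, is where the real work lies.
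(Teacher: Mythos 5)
Your proposal is correct and follows essentially the same route as the paper: realize $\KK$ by adjoining $a$ and then $b$ to $\KK_{00}$, obtain $h(a,b)=h(a)+h(b)-h(a\cap b)$ by identifying the restrictions to $a'\cap b'$ as the doubly counted new extents, then reduce to $a'\cap b'=\emptyset$, write $h(a)=2^{|a'|}-d_1$, $h(b)=2^{|b'|}-d_2$, $h(a\cup b)=2^{|a'|+|b'|}-d_0$, and conclude from $d_1+d_2\le d_0$ and $h(a\cap b)\le 1$. The steps you single out as ``where the real work lies'' (exactness of the double-counting in (i), tightness of the $2^{|\cdot|}-d$ expressions, and the empty-extent bookkeeping) are precisely the steps the paper asserts without further justification; in fact your observation that a single unit shifts between $h(a\cap b)$ and $d_1+d_2-d_0$ according to whether $\emptyset\in\Ext(\KK_{00})$ is more careful than the paper's treatment, whose stated inequality $d_1+d_2\le d_0$ holds verbatim only when $\emptyset\notin\Ext(\KK_{00})$ and is otherwise rescued exactly by the cancellation you describe.
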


\begin{remark}
	If generalizing $a$ and $b$ does not increase the size of the lattice, then the difference 
	\[h(a\cup b)-h(a)-h(b)+h(a\cap b)\]
	is at most zero, and will describe the reduction in the number of concepts. 
\end{remark}

\section{Conclusion} \label{s:conclusion}
In this work, we have shown  a family of concepts lattices in which an
existential generalization on a specific pair of attributes increases the
size of the lattice. We have also found the maximal increase when two attributes are put together. Characterizing contexts where such a generalization increases the size of the lattice is still to be done. Another direction of interest is to look at similarity measures that discriminate attributes if putting these together increases the  number of concepts. 

 
 \end{document}